\newtheorem{Theorem}{Theorem}
\newtheorem{prop}[Theorem]{Proposition}
\newtheorem{cor}[Theorem]{Corollary}
\theoremstyle{definition}
\newtheorem*{remark}{Remark}
\newcommand\inprod[2]{B(#1,#2)}
\newcommand\bqf[4]{\left(\begin{array}{cc}#1&#2\\#3&#4\end{array}\right)}
\title{The 8-universality Criterion is Unique}
\author[Scott Duke Kominers]{Scott Duke Kominers}\thanks{The author was
  supported by a Harvard Mathematics Department Highbridge Fellowship.}
\address{\newline\indent Department of Mathematics, Harvard University\newline\indent c/o 8520 Burning Tree Road, Bethesda, MD 20817}\email{kominers@fas.harvard.edu}
\keywords{$n$-universal lattice, 8-universal lattice, universality criteria, quadratic forms, additively indecomposable}
\subjclass[2000]{11E20, 11E25}
\date{\today}
\begin{document}
\begin{abstract}Using the methods developed for the proof that the $2$-universality criterion is unique, we partially characterize criteria for the $n$-universality of positive-definite integer-matrix quadratic forms.  We then obtain the uniqueness of Oh's $8$-universality criterion as an application of our characterization results.\end{abstract}
\maketitle
\section{Introduction}
A degree-two homogenous polynomial in $n$ independent variables is called a \textit{quadratic form} (or just \emph{form}) \emph{of rank $n$}.   For a rank-$n$ quadratic form $Q(x_1,\ldots,x_n)= \sum_{i,j}a_{ij}x_ix_j$ (where $a_{ij}=a_{ji}$), the matrix given by $L=(a_{ij})$ is the \emph{Gram Matrix} of a $\mathbb{Z}$-lattice $L$ equipped with a  symmetric bilinear form $\inprod{\cdot}{\cdot}$ such that $\inprod{L}{L}\subseteq \mathbb{Z}$.  Then, $Q(\mathbf{x})=\mathbf{x}^TL\mathbf{x}=\inprod{L\mathbf{x}}{\mathbf{x}}$ for $\mathbf{x}\in\mathbb{R}^n$.

A rank-$n$ quadratic form $Q$ is said to \textit{represent} an integer $k$ if there exists an $\mathbf{x}\in\mathbb{Z}^n$ such that $Q(\mathbf{x})=k$.  More generally, a $\mathbb{Z}$-lattice $L$ \textit{represents} another $\mathbb{Z}$-lattice $\ell$ if there exists a $\mathbb{Z}$-linear, bilinear form-preserving injection $\ell \to L$.  A quadratic form is called \textit{universal} if it represents all positive integers.  Analogously, a lattice is called \textit{$n$-universal} if it represents all positive-definite integer-matrix rank-$n$ quadratic forms. Connecting these two notions of universality, we observe that a rank-$n$ quadratic form $Q$ is universal if and only if it is $1$-universal, as for an integer~$k$, $$k = Q(x_1,\ldots, x_n)\iff Q(x_1x,\ldots,x_nx)=kx^2.$$

In 1993, Conway and Schneeberger announced their celebrated \emph{Fifteen Theorem}, giving a criterion characterizing the universal positive-definite integer-matrix quadratic forms.   Specifically, they showed that any positive-definite integer-matrix form which represents the set of nine critical numbers $$\mathcal{S}_1=\{1,2,3,5,6,7,10,14,15\}$$ is universal (see \cite{Conway:universality,Bhargava:Fif}).  Kim, Kim, and Oh \cite{Kim:universal} presented an analogous criterion for $2$-universality, showing that a positive-definite integer-matrix lattice is $2$-universal if and only if it represents the set of forms $$\mathcal{S}_2=\left\{\bqf{1}{0}{0}{1},\bqf{2}{0}{0}{3},\bqf{3}{0}{0}{3},\bqf{2}{1}{1}{2},\bqf{2}{1}{1}{3},\bqf{2}{1}{1}{4}\right\}.$$   Oh \cite{Oh:MinRank} gave a similar criterion for $8$-universality, which we state in Theorem~\ref{8-crit} of Section~\ref{8-critSec}.

A set $\mathcal{S}$ of rank-$n$ lattices having the property that a lattice $L$ is $n$-universal if and only if $L$ represents every lattice in $\mathcal{S}$ is called an \textit{$n$-criterion set}.  Thus, for example, the set $\mathcal{S}_2$ obtained by Kim, Kim, and Oh \cite{Kim:universal} is a $2$-criterion set and the set $\mathcal{S}_1$ found by Conway \cite{Conway:universality} naturally gives the $1$-criterion set $$\left\{x^2,2x^2,3x^2,5x^2,6x^2,7x^2,10x^2,14x^2,15x^2\right\}.$$

The set $\mathcal{S}_1$ of the Fifteen Theorem is known to be unique (see \cite{Kim:finite}), in the sense that if $\mathcal{S}_1'$ is a set of integers such that a quadratic form is universal if and only if it represents the full set $\mathcal{S}_1'$, then $\mathcal{S}_1\subseteq \mathcal{S}_1'$. The author~\cite{Kom} recently obtained an analogous uniqueness result for the  $2$-criterion set $\mathcal{S}_2$.  

Kim, Kim, and Oh \cite{Kim:finite} have proven that $n$-criterion sets exist for all positive integers $n$.  However, the problems of finding and determining the uniquenesses of criterion sets have both proven to be difficult (see \cite{Kim:finite}).    Here, we advance both problems: We obtain the first characterization results for arbitrary $n$-criterion sets, from which we obtain the uniqueness of Oh's 8-universality criterion as a corollary.

\section{Notations and Terminology}
We use the lattice-theoretic language of quadratic form theory.  A complete introduction to this approach may be found in \cite{O'meara:Lattice}.

For a $\mathbb{Z}$-lattice (or hereafter, just \emph{lattice}) $L$ with basis $\{\mathbf{x}_1,\ldots,\mathbf{x}_n\}$, we write $L\cong \mathbb{Z}\mathbf{x}_{1}+\cdots+\mathbb{Z}\mathbf{x}_{n}$.  If $L$ is of the form $L= L_1\oplus L_2$ for sublattices $L_1$, $L_2$ of $L$ and $\inprod{L_1}{L_2}=0$ then we write $L\cong L_1\bot L_2$ and say that $L_1$ and $L_2$ are \textit{orthogonal}.  

 For a sublattice $\ell$ of $L_1\bot L_2$ which can be expressed in the form $$\ell\cong \mathbb{Z}(\mathbf{x}_{1,1}+\mathbf{x}_{2,1})+\cdots+\mathbb{Z}(\mathbf{x}_{1,n}+\mathbf{x}_{2,n})$$ with $\mathbf{x}_{i,j}\in L_i$, we denote $\ell(L_i):=\mathbb{Z}\mathbf{x}_{i,1}+\cdots+\mathbb{Z}\mathbf{x}_{i,n}$.  We naturally extend this notation to lattices $\ell$ represented by $L_1\bot L_2$.  We then say that a lattice is \emph{additively indecomposable} if either $\ell(L_1)\cong 0$ or $\ell(L_2)\cong 0$ whenever $L_1\bot L_2$ represents $\ell$.  Otherwise, we say that $\ell$ is \emph{additively decomposable}.

Finally, we use the lattice notation of Conway \cite{Conway:Low}.  In particular, $I_n$ is the rank-$n$ lattice of the form  $\left<1,\ldots,1\right>$ and $E_8$ is the unique even unimodular lattice of rank $8$.

\section{Characterization Results for $n$-criterion Sets}
In this section, we prove two results which partially characterize the contents of arbitrary $n$-criterion sets.
\begin{prop}\label{P1}
Any $n$-criterion set must include the lattice $I_n$.
\end{prop}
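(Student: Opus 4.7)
My plan is to argue by contradiction.  Suppose $\mathcal{S}$ is an $n$-criterion set with $I_n \notin \mathcal{S}$; I will construct a lattice $L^*$ that represents every $\ell \in \mathcal{S}$ but fails to be $n$-universal, contradicting the defining property of a criterion set.  Since we do not know the precise contents of $\mathcal{S}$, it suffices to construct a single $L^*$ with the stronger property that $L^*$ represents every rank-$n$ positive-definite integer-matrix lattice \emph{except} $I_n$: for then $L^*$ represents every element of $\mathcal{S}$ (as $I_n \notin \mathcal{S}$), while failing on $I_n$ itself, and so $L^*$ is not $n$-universal.

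The construction I would use is $L^* = I_{n-1} \bot M$, where $M$ is an auxiliary lattice of minimum at least $2$ chosen to make the rest of the argument work.  Non-representation of $I_n$ is then easy: if $v = v_1 + v_2 \in I_{n-1} \bot M$ with $v_1 \in I_{n-1}$, $v_2 \in M$ satisfies $\inprod{v}{v} = 1$, then $\inprod{v_1}{v_1} + \inprod{v_2}{v_2} = 1$, which together with the minimum condition on $M$ forces $v_2 = 0$.  Thus every norm-$1$ vector of $L^*$ lies in the $I_{n-1}$ summand, and the norm-$1$ vectors of $I_{n-1}$ are just $\pm$ the standard basis vectors, so $L^*$ admits at most $n-1$ mutually orthogonal norm-$1$ vectors and therefore cannot represent $I_n$.

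The main obstacle, and the heart of the proof, is to choose $M$ so that $L^* = I_{n-1} \bot M$ represents every rank-$n$ integer-matrix lattice $\ell \ne I_n$.  The natural strategy is to exploit the maximal orthogonal $I_k$-summand structure of $\ell$: write $\ell \cong I_{k_\ell} \bot \ell'$ with $k_\ell \leq n-1$ maximal and $\ell'$ admitting no further orthogonal $I_1$ summand, embed the $I_{k_\ell}$ part into the $I_{n-1}$ summand of $L^*$, and use a suitable choice of $M$ (for instance, a sufficiently large orthogonal sum of rank-$n$ integer-matrix lattices of minimum at least $2$) to embed the residue $\ell'$ into the orthogonal complement.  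Verifying that such an $M$ exists and that the embedding extends in every case — in particular handling those $\ell'$ that still harbor non-orthogonal norm-$1$ vectors — is the principal technical step, and the one I expect to require the most care.
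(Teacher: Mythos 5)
Your non-representation argument is fine and is exactly the paper's: with $M$ of minimum at least $2$, every norm-$1$ vector of $L^*=I_{n-1}\bot M$ lies in the $I_{n-1}$ summand, so $L^*$ cannot represent $I_n$. The gap is in the other half. You ask for a single $M$ making $I_{n-1}\bot M$ represent \emph{every} rank-$n$ lattice other than $I_n$ --- an ``almost $n$-universal'' lattice. That is vastly stronger than what the proposition needs, and you cannot get it from your parenthetical suggestion: a finite orthogonal sum of lattices of minimum $\geq 2$ has no a priori reason to represent the infinitely many isometry classes of rank-$\leq n$ lattices of minimum $\geq 2$; establishing that some finite sum does would require something like the Kim--Kim--Oh finiteness theorem, i.e.\ machinery on the order of the results this paper is building toward. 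As it stands, the ``principal technical step'' you defer is the entire proof, and it is not clear it can be completed along the lines you sketch.

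The resource you are not using is the finiteness of the criterion set $\mathcal{S}$: the witness lattice is allowed to depend on $\mathcal{S}$, and only needs to represent the finitely many members of $\mathcal{S}$, not all non-$I_n$ lattices. Since $I_n\notin\mathcal{S}$, write each $T\in\mathcal{S}$ as $T\cong I_{k_T}\bot T'$ with $k_T\leq n-1$ maximal; then $T'$ has minimum $\geq 2$, because any norm-$1$ vector spans a unimodular sublattice, which splits off orthogonally and would contradict maximality of $k_T$. (This also disposes of your worry about residues ``harboring non-orthogonal norm-$1$ vectors'' --- there are none.) Now take $M=\bot_{T\in\mathcal{S}}T'$. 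Each $T\cong I_{k_T}\bot T'$ is represented by $I_{n-1}\bot M$ outright, via $I_{k_T}\hookrightarrow I_{n-1}$ and $T'\hookrightarrow M$ as a literal orthogonal summand, while your minimum argument still shows $I_n$ is not represented. This is the paper's proof, and it is elementary; with this substitution your outline closes.
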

\begin{proof}
If $\mathcal{T}$ is a finite, nonempty set of rank-$n$ lattices not containing $I_n$, then every lattice $T\in \mathcal{T}$ may be written in the form $T\cong I_k\bot T'$, where $0\leq k<n$, the sublattice $T'$ is of rank $n-k$, and the first minimum of $T'$ is larger than $1$.  Indeed, any $I_k$-sublattice of  $T$ is unimodular and therefore splits $T$;  the condition on $T'$ follows from Minkowski reduction.

We may therefore write $\mathcal{T}$ in the form
$$\mathcal{T}=\bigcup_{k=0}^{n-1} \left\{I_k\bot T_{k,i}\right\}_{i=0}^{i_k},$$ where $0<|\mathcal{T}|=\sum_{k=0}^{n-1}i_k$ and each $T_{k,i}$ is a rank-$(n-k)$ lattice with first minimum greater than $1$.  Then, the lattice
$$I_{n-1}\bot \left(\left(\bot_{i=0}^{i_1}\ T_{0,i}\right)\bot\cdots\bot\left(\bot_{i=0}^{i_{n-1}}\ T_{n-1,i}\right)\right)$$ represents all of $\mathcal{T}$ but has does not represent $I_n$.  It follows that $\mathcal{T}$ is not an $n$-criterion set, so any $n$-criterion set must contain $I_n$.
\end{proof}

\begin{prop}\label{P2}
Let $\mathcal{E}$ be the set of additively indecomposable lattices of rank $n$.  If $|\mathcal{E}|>0$, then any $n$-criterion set must include at least one lattice $E\in \mathcal{E}$.
\end{prop}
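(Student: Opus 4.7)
The plan is to emulate the construction in the proof of Proposition~\ref{P1}. Given a finite, nonempty set $\mathcal{T}$ of additively decomposable rank-$n$ lattices (so that $\mathcal{T}\cap\mathcal{E}=\emptyset$), I will build a lattice $L$ that represents every $T\in\mathcal{T}$ but fails to represent some $E\in\mathcal{E}$, showing that $\mathcal{T}$ cannot be an $n$-criterion set.  The key leverage is the defining property of additive indecomposability: whenever $L$ is displayed as $M_1\bot M_2$, any embedding $E\hookrightarrow L$ forces $E\hookrightarrow M_1$ or $E\hookrightarrow M_2$, so iterating along an orthogonal presentation of $L$ yields that $E$ must embed into a single orthogonally indecomposable constituent.

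For each $T\in\mathcal{T}$, I would invoke additive decomposability to obtain a splitting $T\hookrightarrow A_T\bot B_T$ with $T(A_T)\not\cong 0$ and $T(B_T)\not\cong 0$.  Iteratively refining this splitting---further splitting any component $A_T$ or $B_T$ that remains additively decomposable---yields an embedding $T\hookrightarrow\bot_j F_{T,j}$ into an orthogonal sum of additively indecomposable ``atomic'' pieces.  I would then set
$$L\;=\;\bot_{T\in\mathcal{T}}\ \bot_j F_{T,j},$$
so that $L$ automatically represents each $T\in\mathcal{T}$, since $\bot_j F_{T,j}$ is an orthogonal summand.  By the iterated indecomposability observation above, any $E\hookrightarrow L$ must factor as $E\hookrightarrow F_{T_0,j_0}$ for some single atomic piece.

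The hard part, I expect, is to arrange the refined decomposition so that every atomic piece $F_{T,j}$ has rank strictly less than $n$; with this, the rank-$n$ lattice $E$ cannot embed into any $F_{T,j}$ for dimensional reasons, and the construction succeeds.  The iteration can in principle terminate at an additively indecomposable component of rank $n$ (which itself lies in $\mathcal{E}$ and admits no further additive splitting).  Overcoming this case will likely require a structural lemma showing that every additively decomposable rank-$n$ lattice embeds into an orthogonal sum of additively indecomposable lattices all of rank $<n$, or, failing such a lemma, a separate argument in which $E$ is chosen from $\mathcal{E}$ to avoid the (finitely many) rank-$n$ atomic pieces that appear across the decompositions of $\mathcal{T}$.
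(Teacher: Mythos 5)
There is a genuine gap, and you have correctly located it yourself: your construction requires every atomic piece $F_{T,j}$ to have rank strictly less than $n$, and nothing in the definition of additive decomposability guarantees this. A rank-$n$ lattice $T$ is additively decomposable when some representation $T\hookrightarrow L_1\bot L_2$ has both projections $T(L_1)$ and $T(L_2)$ nonzero, but one of these projections may itself have rank $n$; your refinement can therefore terminate at a rank-$n$ additively indecomposable piece $F\in\mathcal{E}$, which then sits inside your $L$ as an orthogonal summand and destroys the argument. Neither of your proposed escape routes closes this: the ``structural lemma'' is not established (it is exactly the missing content), and choosing $E$ to avoid the rank-$n$ atomic pieces fails when $\mathcal{E}$ is small --- for $n=8$ one has $\mathcal{E}=\{E_8\}$, so there is nothing else to choose.

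The paper's proof shows that the entire refinement is unnecessary. It takes $L=T_1\bot\cdots\bot T_k$, the orthogonal sum of the members of $\mathcal{T}$ themselves; this visibly represents every $T_i$. If some $E\in\mathcal{E}$ were represented by $L$, then the iterated-indecomposability observation you already made (applied to $E$, not to the $T_i$) forces $E\hookrightarrow T_i$ for a single index $i$. Since $E$ and $T_i$ both have rank $n$, this embedding has full rank, and a full-rank sublattice of an additively decomposable lattice is additively decomposable: if $T_i\hookrightarrow L_1\bot L_2$ with both projections nonzero, those projections remain nonzero on the finite-index sublattice $E$. This contradicts $E\in\mathcal{E}$. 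In short, the correct leverage is not decomposing the $T_i$ into small pieces but transferring the decomposability of each $T_i$ to any rank-$n$ sublattice of it; once you see that, your step of passing to atomic constituents (and the rank-control problem it creates) disappears.
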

\begin{proof}
If $\mathcal{T}=\{T_i\}_{i=1}^k$ is a finite, nonempty set of rank-$n$ lattices with $\mathcal{T}\cap \mathcal{E}=\emptyset$, then every lattice $T_i\in\mathcal{T}$ is additively decomposable.  It follows that the lattice 
$$ T_1\bot\cdots\bot T_k$$ represents all of $\mathcal{T}$ but does not represent any lattice in $\mathcal{E}$, since $T_1\bot\cdots\bot T_k$ has no rank-$n$ additively indecomposable sublattices.  Thus, $\mathcal{T}$ is not an $n$-criterion set.  It then follows that any $n$-criterion set must contain some lattice $E\in\mathcal{E}$.
\end{proof}

\begin{remark}It is clear that direct analogues of these two propositions hold in the more general setting of $\mathsf{S}$-universal lattices discussed in \cite{Kim:finite}.  In particular, suppose that $\mathsf{S}$ is an infinite set of lattices.  Then, if $n=\max\left\{k: I_k\in \mathsf{S}\right\}>0$, any finite set $\mathcal{S}_{\mathsf{S}}\subset \mathsf{S}$ with the property that a lattice $L$ represents every lattice $\ell\in \mathsf{S}$ if and only if $L$ represents every $\ell\in\mathcal{S}_{\mathsf{S}}$ must contain $I_n$.  Similarly, such a set $\mathcal{S}_{\mathsf{S}}$ must contain an additively indecomposable lattice if $\mathsf{S}$ does.
\end{remark}

\section{Uniqueness of the 8-criterion Set}
\label{8-critSec}
Oh obtained the following $8$-criterion set in \cite[remark on Theorem 3.1]{Oh:MinRank}:
\begin{Theorem}[Oh] \label{8-crit}The set $\mathcal{S}_8=\{I_8,E_8\}$ is an $8$-criterion set.\end{Theorem}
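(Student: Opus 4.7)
Since the theorem is attributed to Oh \cite{Oh:MinRank}, one option is simply to cite it; but here is how I would attack it from scratch. The forward implication is trivial, so the real content is in showing that any lattice $L$ representing both $I_8$ and $E_8$ represents every positive-definite integer-matrix rank-$8$ lattice $\ell$.

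My first move would be to exploit the unimodularity of $I_8$ and $E_8$. Standard splitting results (see \cite{O'meara:Lattice}) then give orthogonal decompositions $L\cong I_8\bot L'$ and $L\cong E_8\bot L''$ using (in general) different sublattices of $L$. Because $I_8$ is odd while $E_8$ is even, the embedded copy of $E_8$ cannot sit inside the $I_8$ summand, so these two decompositions are genuinely distinct and in particular $L$ has rank at least $9$.

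Given an arbitrary rank-$8$ lattice $\ell$, I would attempt a case split on the parity of $\ell$. If $\ell$ is odd, the plan is to embed $\ell$ into $I_8\bot L'$, using the norm-$1$ vectors supplied by $I_8$ to absorb an odd unimodular summand and placing the residual in $L'$, which itself represents $E_8$ as a bonus. If $\ell$ is even, I would use the $E_8$-decomposition instead, leveraging the uniqueness of $E_8$ as the positive-definite even unimodular lattice of rank $8$ to reduce to embedding an even ``residual'' in $L''$.

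The main obstacle I anticipate is controlling the rank-$8$ lattices that do not cleanly split off a unimodular summand---for instance, non-unimodular even lattices of rank $8$ with nontrivial discriminant. Here neither decomposition alone suffices, and one must marry structure from both. The natural strategy is an escalator-type argument in the spirit of Bhargava's proof of the $15$ Theorem \cite{Bhargava:Fif}: begin with the hypothesis that $\{I_8,E_8\}$ is represented, iteratively adjoin any rank-$8$ lattice not yet forced to be represented, and argue that the escalation terminates immediately---i.e., that there is no ``third'' critical rank-$8$ lattice needed. Establishing this termination---equivalently, verifying that the minimal rank-$16$ or rank-$17$ lattice representing $\{I_8,E_8\}$ is already $8$-universal---is the hard step and the substantive content of Oh's original argument.
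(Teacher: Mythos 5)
You should first note that the paper itself gives no proof of Theorem~\ref{8-crit}: it is quoted from the remark on Theorem~3.1 of \cite{Oh:MinRank}, so your first option --- simply citing Oh --- is exactly what the paper does and is the appropriate move here. Judged as a proof, however, your sketch has a genuine gap, and you name it yourself: after all your reductions, the assertion that every positive-definite integer-matrix rank-$8$ lattice is actually represented remains open, and you explicitly defer it to ``Oh's original argument.'' Since that assertion \emph{is} the theorem (the forward direction being trivial, as you say), the sketch establishes nothing beyond the trivial direction. A secondary problem is that your parity split does not get off the ground as stated: an odd rank-$8$ lattice need not contain any vector of norm $1$ (its minimum can be $2$ or larger), so there is no ``odd unimodular summand'' for the $I_8$ factor to absorb; you flag this obstacle but offer no way around it.

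Your endgame can, however, be made precise and is worth recording. Since $I_8$ is unimodular it splits $L$, so $L\cong I_8\bot L'$. A full-rank unimodular sublattice of $I_8$ has index $1$ and hence equals $I_8$, which is odd, so $E_8$ is not represented by $I_8$; combined with the additive indecomposability of $E_8$ (the fact the paper invokes in Corollary~\ref{unique}), any copy of $E_8$ in $I_8\bot L'$ must land entirely in $L'$, and being unimodular it splits $L'$ as well. Hence $L\cong I_8\bot E_8\bot L''$, and the theorem is equivalent to the single statement that $I_8\bot E_8$ is $8$-universal. That statement is the substantive content of Oh's remark, and he obtains it from classical representation theorems (representability of rank-$n$ lattices by sums of squares, with $E_8$ governing the exceptional cases), not from a Bhargava-style escalation, which would be computationally out of reach in rank $8$. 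Until that final representability statement is supplied --- by citation or otherwise --- your argument is a plan, not a proof.
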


The set $\mathcal{S}_8$ is clearly a \textit{minimal} 8-criterion set, as for each $\ell\in \mathcal{S}_8$ there is a lattice which represents $\mathcal{S}_8\setminus \ell$ but does not represent $\ell$.  (The single lattice in $\mathcal{S}_8\setminus \ell$ suffices.)  Meanwhile, our characterization results imply the following corollary which strengthens Theorem~\ref{8-crit}:
\begin{cor}\label{unique}Every $8$-criterion set must contain $\mathcal{S}_8$ as a subset.
\end{cor}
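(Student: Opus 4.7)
By Proposition~\ref{P1}, every $8$-criterion set $\mathcal{S}'_8$ already contains $I_8$, so only the inclusion $E_8 \in \mathcal{S}'_8$ remains to be proved. The plan is to first establish that $E_8$ is additively indecomposable (in the sense of Section~2), and then bootstrap this fact using the criterion-set property of $\mathcal{S}'_8$.

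To verify that $E_8$ is additively indecomposable, I would suppose $E_8 \hookrightarrow L_1 \bot L_2$ and write $\ell_i := \ell(L_i)$, so that $E_8 \subseteq \ell_1 \bot \ell_2$.  Because $E_8$ is unimodular, it splits off as an orthogonal summand of any positive-definite integer-matrix overlattice; in particular, $\ell_1 \bot \ell_2 = E_8 \bot K$ for some sublattice $K$. On the other hand, the positive-definite lattice $\ell_1 \bot \ell_2$ admits a unique decomposition into orthogonally indecomposable sublattices (the Krull--Schmidt-type uniqueness for positive-definite integer lattices, see \cite{O'meara:Lattice}), and this decomposition must simultaneously refine both the $\ell_1 \bot \ell_2$ and the $E_8 \bot K$ decompositions. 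Since $E_8$ is itself orthogonally indecomposable, it must coincide with one of these indecomposable summands, and that summand in turn lies wholly inside $\ell_1$ or $\ell_2$.  This forces $\ell_2 \cong 0$ or $\ell_1 \cong 0$, respectively, which is exactly the additive indecomposability of $E_8$.

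To conclude, write $\mathcal{S}'_8 = \{T_1, \ldots, T_m\}$ and set $L = T_1 \bot \cdots \bot T_m$.  Since $L$ represents every $T_i$, the criterion-set property makes $L$ an $8$-universal lattice; in particular, $L$ represents $E_8$.  Applying additive indecomposability of $E_8$ iteratively to the decomposition $L = T_i \bot (\bot_{j \neq i} T_j)$ (once per summand), one concludes that $E_8 \hookrightarrow T_i$ for some $i$.  Both $E_8$ and $T_i$ have rank $8$, and $\det(E_8) = 1$; the identity $\det(E_8) = [T_i : E_8]^2 \det(T_i)$, read in positive integers, then forces $[T_i : E_8] = 1$ and $\det(T_i) = 1$, so $T_i \cong E_8$ and $E_8 \in \mathcal{S}'_8$.

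The main obstacle is proving the additive indecomposability of $E_8$.  My approach routes this through the classical (though not entirely elementary) uniqueness of orthogonal decomposition into indecomposable summands.  A root-theoretic alternative, tracking how the $240$ roots of $E_8$ distribute between $L_1$ and $L_2$ via the connectedness of the $E_8$ Dynkin diagram, is possible but appears combinatorially heavier.
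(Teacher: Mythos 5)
Your argument is correct, but it reaches $E_8\in\mathcal{S}'_8$ by a genuinely different route than the paper. The paper's proof is a two-line deduction: Proposition~\ref{P1} supplies $I_8$, and Proposition~\ref{P2} --- which guarantees that \emph{some} additively indecomposable rank-$8$ lattice lies in every $8$-criterion set --- is combined with the asserted classification fact that $E_8$ is the \emph{unique} additively indecomposable lattice of rank $8$. You instead prove only the weaker statement that $E_8$ \emph{is} additively indecomposable (via unimodular splitting together with the Eichler uniqueness of the indecomposable orthogonal splitting of a positive-definite $\mathbb{Z}$-lattice), and then rerun the orthogonal-sum construction underlying Proposition~\ref{P2} in a sharpened form: $T_1\bot\cdots\bot T_m$ represents all of $\mathcal{S}'_8$, hence is $8$-universal and represents $E_8$; additive indecomposability pushes the image of $E_8$ into a single $T_i$; and unimodularity plus the rank count forces $T_i\cong E_8$. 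This trades the classification input for a determinant computation --- you never need to know that $E_8$ is the \emph{only} additively indecomposable rank-$8$ lattice, only that it is one and is unimodular --- at the cost of bypassing Proposition~\ref{P2} rather than invoking it and of supplying the indecomposability argument that the paper leaves implicit. Both of your auxiliary facts (unimodular sublattices split off orthogonally, and uniqueness of the indecomposable components) are classical and available in \cite{O'meara:Lattice}; the first is already quoted in the paper's proof of Proposition~\ref{P1}. Two small caveats: your final step tacitly assumes the criterion set is finite, but the paper's own Propositions~\ref{P1} and~\ref{P2} make the same assumption; and your appeal to the unique splitting theorem genuinely requires the components to be unique \emph{as sublattices} (not merely up to isometry) so that each one lands wholly inside $\ell_1$ or $\ell_2$ --- fortunately that is exactly what the theorem gives.
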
\begin{proof}
Since $E_8$ is the unique additively indecomposable lattice of rank $8$, the result follows directly from Propositions \ref{P1} and \ref{P2}.
\end{proof}  Corollary \ref{unique}, when combined with Theorem \ref{8-crit}, shows that $\mathcal{S}_8$ is the unique minimal 8-criterion set.

\section*{Acknowledgements}
The author is grateful to Pablo Azar, Noam D. Elkies, Andrea Hawksley, and Paul M. Kominers for their helpful comments and suggestions on the work and on earlier drafts of this article.

\end{document}